\newtheorem{theorem}{Theorem}%[section]
\newtheorem{lemma}[theorem]{Lemma}
\newtheorem{proposition}[theorem]{Proposition}
\newtheorem*{conjecture*}{Conjecture}
\newtheorem{corollary}[theorem]{Corollary}
\theoremstyle{definition}
\newtheorem*{goal*}{Goal}
\theoremstyle{remark}
\def\supp{\mathop{\mathrm{supp}}\nolimits}
\author{Shobu Shiraki}
\address[Shobu Shiraki] {Department of Mathematics, Graduate school of Science and Engineering, Saitama University, Saitama, 338-8570, Japan}
\email{s.shiraki.446@ms.saitama-u.ac.jp}
\begin{document}
\date{\today}
\title[Pointwise convergence along restricted directions]{Pointwise convergence along restricted directions for the fractional Schr\"odinger equation}

\begin{abstract}
We consider the pointwise convergence problem for the solution of Schr\"odinger-type equations along directions determined by a given compact subset of the real line. This problem contains Carleson's problem as the most simple case and was studied in general by Cho--Lee--Vargas. We extend their result from the case of the classical Schr\"odinger equation to a class of equations which includes the fractional Schr\"odinger equations. To achieve this, we significantly simplify their proof by completely avoiding a time localization argument.
\end{abstract}
\maketitle
\section{Introduction}
Let $d\ge1$, $a>1$ and consider the fractional Schr\"odinger equation
\[
\begin{cases}
\partial_tu(x,t)=i(-\Delta_x)^{\frac a2}u(x,t)&\quad(x,t)\in\mathbb{R}^d\times\mathbb{R}\\
u(x,0)=f(x)&\quad x\in\mathbb{R}^d.
\end{cases}
\]
It is well-known that for a sufficiently nice initial data $f$, the solution can be written as
\[
u(x,t)=e^{it(-\Delta)^{\frac a2}}f(x):=\left(\frac{1}{2\pi}\right)^d\int_{\mathbb{R}^d}e^{i(x\cdot\xi+t|\xi|^a)}\widehat{f}(\xi) \,d\xi,
\]
where $\widehat{f}(\xi):=\int_{\mathbb{R}^d}e^{-ix\cdot\xi}f(x)\,dx$. When $a=2$, this is the standard Schr\"odinger equation from quantum mechanics. The general case has arisen in recent years in physical models and has been the subject of numerous papers (see, for example, \cite{CL13, CS88, GH11, GY14, Ls00, Ps09}). Associated with the fractional Schr\"odinger equation, it is natural to try to determine the minimum level of regularity $s$ which guarantees that the limit
\begin{equation}\label{lim:general Carleson problem with Gamma}
\lim_{\substack{(y,t)\to (x,0)\\(y,t)\in\Gamma_x}}e^{it(-\Delta)^{\frac a2}}f(y)=f(x)\qquad \mathrm{a.e.}
\end{equation}
holds whenever $f\in H^s(\mathbb{R^d})$. Here, $H^s(\mathbb{R}^d)$ is the Sobolev space of order $s$ whose norm is given by 
\[
\|f\|_{H^s(\mathbb{R}^d)}=\|(1-\Delta)^{\frac s2}f\|_{L^2(\mathbb{R}^d)}
\]
and $\Gamma_x\subset\mathbb{R}^d\times[-1,1]$ is a convergence domain correspondings to each $x\in\mathbb{R}^d$.

The classical case, known widely as Carleson's problem, is concerned with the case of vertical lines $\Gamma_x=\{x\}\times[-1,1]$. Here, when $d=1$ it is known that \eqref{lim:general Carleson problem with Gamma} holds if and only if $s\ge\frac14$; see the work of Carleson \cite{Cr80} and Dahlberg--Kenig \cite{DK82} for the case $a=2$, and also see the work of Sj\"olin \cite{Sj87} for $a>1$. The higher dimensional case $d\ge2$ has been subject to a recent flurry of activity. When $a=2$, Bourgain \cite{Br16} showed that $s\ge\frac12-\frac{1}{2(d+1)}$ is necessary for \eqref{lim:general Carleson problem with Gamma} for $d\ge2$, and Du--Guth--Li \cite{DGL17} and Du--Zhang \cite{DZ18} have shown $s>\frac12-\frac{1}{2(d+1)}$ is sufficient for \eqref{lim:general Carleson problem with Gamma} for $d=2$ and $d\ge3$, respectively (for important earlier contributions see, for example, papers by Vega \cite{Vg88}, Lee \cite{Lee06} and Bourgain \cite{Br13}). In addition, for $a>1$, Cho--Ko \cite{CK18} proved that \eqref{lim:general Carleson problem with Gamma} holds if $s>\frac12-\frac{1}{2(d+1)}$ and $d\ge2$. 

Non-tangential convergence corresponds to the case
\[
\Gamma_x=\{x+t\theta : t\in[-1,1]\ \text{and}\ \theta\in \mathbb{B}\},
\]
where $\mathbb{B}\subset\mathbb{R}^d$ is a given euclidean ball which is centered at the origin, that is, $\Gamma_x$ is a conical region with vertex at $(x,0)$ and aperture determined by the radius of $\mathbb{B}$. In this case, it is known that \eqref{lim:general Carleson problem with Gamma} holds if and only if $s>\frac d2$. The sufficiency part of this claim follows easily by a well-known argument using Sobolev embedding and the delicate necessity part has proved by Sj\"ogren--Sj\"olin in \cite{SS89}.

When $d=1$, the classical case and the non-tangential case were unified in a natural way by Cho--Lee--Vargas \cite{CLV12} who proved that \eqref{lim:general Carleson problem with Gamma} holds in the case 
\[
\Gamma_x=\{x+t\theta: t\in[-1,1]\ \text{and}\ \theta \in \Theta\}
\]
when $a=2$ and $s>\frac{\beta(\Theta)+1}{4}$. Here, $\Theta\subset\mathbb{R}$ is a given compact set and $\beta(\Theta)$ denotes the upper Minkowski dimension of $\Theta$. Our main goal in this paper is to improve the result in \cite{CLV12} by extending to a class of equations which includes the fractional Schr\"odinger equation for $a>1$. We define the evolution operator $S_t$ on appropriate input functions by
\[
S_tf(x)=\frac{1}{2\pi}\int_{\mathbb{R}}e^{i(x\xi+t\Phi(\xi))}\widehat{f}(\xi)\,d\xi.
\]
Here, $\Phi:\mathbb{R}\to\mathbb{R}$ is a $C^2$ function which satisfies
for some $C_1>0$,
\begin{equation}\label{Phi'' is bounded below}
|\xi||\Phi''(\xi)|\ge C_1 
\end{equation}
for all $|\xi|\ge1$. Moreover, for some $C_2>0$,
\begin{equation}\label{Phi'' is bigger than Phi'}
|\xi||\Phi''(\xi)|\ge C_2|\Phi'(\xi)|
\end{equation}
 for all $|\xi|\ge1$. It is trivial to verify that $\Phi(\xi)=|\xi|^a$ satisfies these conditions when $a>1$.
 
Our main result is the following.
\begin{theorem}\label{thm:improved CLV}
Let $\Theta\subset\mathbb{R}$ be compact and suppose $\Phi\in C^2(\mathbb{R})$ satisfies \eqref{Phi'' is bounded below} and \eqref{Phi'' is bigger than Phi'}. For any $q\in[1,4]$ and $s>\frac{\beta(\Theta)+1}{4}$, there exists a constant $C_{q,s}$ such that 
\[
\left\|\sup_{(t,\theta)\in[-1,1]\times\Theta}|S_tf(\cdot+t\theta)|\right\|_{L^q(-1,1)}\le C_{q,s}\|f\|_{H^s(\mathbb{R})}.
\]
\end{theorem}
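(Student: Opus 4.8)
The plan has three stages: a Littlewood--Paley reduction, a reduction to a single direction that converts the upper Minkowski dimension into a power of the frequency, and a frequency-localised maximal estimate where the hypotheses \eqref{Phi'' is bounded below}--\eqref{Phi'' is bigger than Phi'} do the real work. \emph{Reductions.} Since $(-1,1)$ has finite measure, H\"older's inequality reduces the claim to $q=4$. Decompose $f=f_{\le1}+\sum_\lambda f_\lambda$ into Littlewood--Paley pieces with $\widehat{f_\lambda}$ supported in $\{|\xi|\sim\lambda\}$, $\lambda\ge1$ dyadic; the low-frequency part obeys $\sup_{y,t}|S_tf_{\le1}(y)|\lesssim\|\widehat{f_{\le1}}\|_{L^1}\lesssim\|f\|_{L^2}$ and is harmless. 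It thus suffices to prove, for every $\varepsilon>0$,
\[
\Bigl\|\sup_{(t,\theta)\in[-1,1]\times\Theta}|S_tf_\lambda(\cdot+t\theta)|\Bigr\|_{L^4(-1,1)}\lesssim_\varepsilon\lambda^{\frac{\beta(\Theta)+1}{4}+\varepsilon}\|f_\lambda\|_{L^2},
\]
since summing over $\lambda$ by the triangle inequality and Cauchy--Schwarz, using $\sum_\lambda\lambda^{2s}\|f_\lambda\|_{L^2}^2\lesssim\|f\|_{H^s}^2$, gives the theorem once $\varepsilon$ is small enough that $\frac{\beta(\Theta)+1}{4}+\varepsilon<s$.

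\emph{Reducing to a single direction.} Write $e^{it\Psi(D)}$ for the Fourier multiplier with symbol $e^{it\Psi(\xi)}$. The identity $S_tf_\lambda(x+t\theta)=e^{it\Phi_\theta(D)}f_\lambda(x)$ with $\Phi_\theta(\xi):=\theta\xi+\Phi(\xi)$ shows the tilt merely adds a linear term to $\Phi$. As $\Phi_\theta''=\Phi''$, the curvature bound \eqref{Phi'' is bounded below} holds for $\Phi_\theta$ with the same constant, and since $|\theta|\lesssim1\lesssim|\xi||\Phi''(\xi)|$ by \eqref{Phi'' is bounded below} while $|\Phi'(\xi)|\lesssim|\xi||\Phi''(\xi)|$ by \eqref{Phi'' is bigger than Phi'}, the bound \eqref{Phi'' is bigger than Phi'} holds for $\Phi_\theta$ with a constant uniform over $\theta$ in the bounded set $\Theta$. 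By the definition of upper Minkowski dimension, $\Theta$ is covered by $N\lesssim_\varepsilon\lambda^{\beta(\Theta)+\varepsilon}$ intervals of length $\lambda^{-1}$; let $\theta_j$ be their centres and $\Theta_j$ the pieces of $\Theta$. For $\theta\in\Theta_j$, the factorisation $e^{it\Phi_\theta(D)}=e^{it(\theta-\theta_j)D}e^{it\Phi_{\theta_j}(D)}$ exhibits $S_tf_\lambda(\cdot+t\theta)$ as a translate of $e^{it\Phi_{\theta_j}(D)}f_\lambda$ by $t(\theta-\theta_j)$, with $|t(\theta-\theta_j)|\le\lambda^{-1}$; since $e^{it\Phi_{\theta_j}(D)}f_\lambda$ has spatial frequencies $\sim\lambda$, a local-constancy (Nikolskii-type) inequality at scale $\lambda^{-1}$ and the translation invariance of Lebesgue measure give, up to a rapidly decaying remainder,
\[
\Bigl\|\sup_{(t,\theta)\in[-1,1]\times\Theta}|S_tf_\lambda(\cdot+t\theta)|\Bigr\|_{L^4(-1,1)}\lesssim\Bigl(\sum_{j=1}^N\bigl\|\sup_{|t|\le1}|e^{it\Phi_{\theta_j}(D)}f_\lambda|\bigr\|_{L^4(-2,2)}^4\Bigr)^{1/4}.
\]
Everything therefore reduces to the \emph{single-direction maximal estimate}
\[
\bigl\|\sup_{|t|\le1}|e^{it\Psi(D)}g|\bigr\|_{L^4(-2,2)}\lesssim\lambda^{1/4}\|g\|_{L^2},\qquad\widehat g\ \text{supported in}\ \{|\xi|\sim\lambda\},
\]
\emph{uniformly} over $C^2$ symbols $\Psi$ satisfying \eqref{Phi'' is bounded below}--\eqref{Phi'' is bigger than Phi'} with fixed constants: feeding it back with $\Psi=\Phi_{\theta_j}$ yields $N^{1/4}\lambda^{1/4}\|f_\lambda\|_{L^2}\lesssim_\varepsilon\lambda^{\frac{\beta(\Theta)+1}{4}+\varepsilon/4}\|f_\lambda\|_{L^2}$, as needed.

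\emph{The single-direction estimate, globally in time.} For $\Psi(\xi)=|\xi|^a$ this estimate is essentially classical (cf.\ \cite{Sj87}); the point is to establish it from \eqref{Phi'' is bounded below}--\eqref{Phi'' is bigger than Phi'} alone and, unlike \cite{CLV12}, \emph{without decomposing the time interval}. The idea is to remove $\sup_{|t|\le1}$ once and for all by a one-dimensional Sobolev embedding in $t$ applied to $|u(x,t)|^4$ with $u=e^{it\Psi(D)}g$, i.e.
\[
\sup_{|t|\le1}|u(x,t)|^4\lesssim\int_{-1}^1|u(x,t)|^4\,dt+\int_{-1}^1|u(x,t)|^2\bigl|\partial_t(|u(x,t)|^2)\bigr|\,dt.
\]
After integrating in $x$ over $(-2,2)$, the first term is controlled by a Strichartz-type estimate on the bounded time interval (available under \eqref{Phi'' is bounded below}) together with Bernstein in $x$, costing $\lesssim\lambda\|g\|_{L^2}^4$. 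For the second term, the crucial observation is that $\partial_t(|u|^2)=\partial_t(u\bar u)$, in its bilinear representation, carries the symbol $\Psi(\xi)-\Psi(\eta)$, which is at most $\sup_{|\zeta|\sim\lambda}|\Psi'(\zeta)|$ times the \emph{output} spatial frequency $|\xi-\eta|$ of the product, rather than the full $|\Psi(\xi)|\sim|\xi|^a$; decomposing the product according to its output frequency and estimating each piece by a $TT^*$/bilinear computation, \eqref{Phi'' is bounded below} provides the curvature gain while \eqref{Phi'' is bigger than Phi'} keeps the drift term under control and gives uniformity in the tilt.

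\emph{The main obstacle.} The delicate point is this last step: $\partial_t$ applied to $e^{it\Psi(D)}g$ produces a symbol of size $\sim|\xi|^a$, vastly larger than $\lambda$, so the entire saving must come from oscillation and from the bilinear cancellation inside $\partial_t(|u|^2)$; arranging the bookkeeping of powers of $\lambda$ so that the estimate closes at the sharp exponent $\lambda^{1/4}$ while the argument stays global on $[-1,1]$ is the crux, and it is exactly here that \eqref{Phi'' is bounded below} and \eqref{Phi'' is bigger than Phi'} are used together. This time-global, curvature-driven scheme is what makes the passage from the classical case $a=2$ to the class of phases considered here possible.
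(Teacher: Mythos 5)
Your outer reductions are sound and, for the most part, parallel the paper's argument specialised to $q=4$ (the paper's parameter $\sigma=q/4$ becomes $\sigma=1$): H\"older to reduce to $q=4$, Littlewood--Paley, covering $\Theta$ by $N\lesssim_\varepsilon\lambda^{\beta(\Theta)+\varepsilon}$ intervals of length $\lambda^{-1}$, and reduction to a frequency-localised maximal estimate with gain $\lambda^{1/4}$. Your device for then removing the residual $\theta$-supremum over each short interval --- absorbing the tilt $\theta_j$ into the phase, checking that $\Phi_{\theta_j}$ satisfies \eqref{Phi'' is bounded below} and \eqref{Phi'' is bigger than Phi'} uniformly, and invoking local constancy at scale $\lambda^{-1}$ --- is a legitimate alternative to the paper, which instead keeps $\theta\in\Omega$ inside the $TT^*$ kernel and handles it through the splitting of the parameter domain into $V_2$ and $V_3$.

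The gap is the single-direction estimate $\|\sup_{|t|\le1}|e^{it\Psi(D)}g|\|_{L^4}\lesssim\lambda^{1/4}\|g\|_{L^2}$, which is the entire analytic content of the theorem (it is Proposition \ref{prop: MPf is lesssim than lambda f} in the paper) and which you do not prove. Your proposed route --- Sobolev embedding in $t$ applied to $|u|^4$, Strichartz for the first term, and a bilinear estimate for $\partial_t(|u|^2)$ for the second --- does not close as described. After Cauchy--Schwarz the second term requires $\|\partial_t(|u|^2)\|_{L^2_{x,t}}\lesssim\lambda^{1/2}\|g\|_{L^2}^2$. The symbol of $\partial_t(u\bar u)$ is $\Psi(\xi)-\Psi(\eta)$, bounded by $\sup_{|\zeta|\sim\lambda}|\Psi'(\zeta)|\,|\xi-\eta|$, while the Jacobian in the standard bilinear $L^2$ computation is $|\Psi'(\xi)-\Psi'(\eta)|\gtrsim\inf|\Psi''|\,|\xi-\eta|$; the resulting bound is $\bigl(\sup|\Psi'|\bigr)\bigl(\inf|\Psi''|\bigr)^{-1/2}\lambda^{1/2}\|g\|_{L^2}^2$, which for $\Psi(\xi)=|\xi|^a$ is $\lambda^{(a+1)/2}\|g\|_{L^2}^2$ --- a loss of $\lambda^{a/2}$, essentially sharp for data spread over the annulus, so the scheme only recovers an exponent strictly worse than $1/4$ (for $a=2$, the trivial $s>1/2$). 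A finer frequency pairing of $|u|^2$ against $\partial_t(|u|^2)$, which could in principle recoup the loss, is obstructed by the absolute values that the supremum in $t$ forces into the fundamental theorem of calculus. You flag this step as ``the crux'' but do not supply it; it is precisely what the paper provides by a different mechanism: a $TT^*$ argument whose kernel $K_\lambda$ is estimated by van der Corput's lemma (Lemma \ref{lem:van der Corput}, with $k=1$ on $U_1$ and $k=2$ on $V_1$ and $U_2$, using \eqref{Phi'' is bounded below} and \eqref{Phi'' is bigger than Phi'}) on the regions $V_1,V_2,V_3$, combined with the Hardy--Littlewood--Sobolev-type Lemma \ref{lem: HLS-type}. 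As written, the proposal identifies the central difficulty without resolving it.
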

By standard arguments, we thus obtain the associated pointwise convergence.
\begin{corollary}
Let $\Theta\subset\mathbb{R}$ be compact and suppose $\Phi\in C^2(\mathbb{R})$ satisfies \eqref{Phi'' is bounded below} and \eqref{Phi'' is bigger than Phi'}. If $s>\frac{\beta(\Theta)+1}{4}$, then
 \begin{equation}
\lim_{\substack{(y,t)\to (x,0)\\y-x\in t\Theta}}S_tf(y)=f(x)\qquad \mathrm{a.e.}
\end{equation}
whenever $f\in H^s(\mathbb{R})$. 
\end{corollary}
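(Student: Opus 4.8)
The plan is to deduce the Corollary from Theorem~\ref{thm:improved CLV} by the standard Calder\'on--Stein argument passing from a maximal inequality to almost-everywhere convergence. Since $S_t$ commutes with translations and the $H^s(\mathbb{R})$-norm is translation invariant, the estimate of Theorem~\ref{thm:improved CLV} continues to hold with $(-1,1)$ replaced by any interval $(x_0-1,x_0+1)$, $x_0\in\mathbb{R}$. Thus it is enough to prove the stated limit for almost every $x$ in the fixed interval $I=(-1,1)$, since running the argument on each $(n-1,n+1)$, $n\in\mathbb{Z}$, and taking the union then gives the conclusion on all of $\mathbb{R}$. One should also recall at the outset that for $f\in H^s(\mathbb{R})$ with $0<s<\tfrac12$ the quantity $S_tf$ is not given by an absolutely convergent integral; as usual it, together with the maximal and oscillation functions appearing below, is defined via approximation by Schwartz data, the a priori bound of Theorem~\ref{thm:improved CLV} guaranteeing that this is consistent up to null sets and that the resulting functions are measurable. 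I treat this (routine) point as understood.

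First I would prove the limit on a dense subclass. Take $\mathcal{D}=\{g\in H^s(\mathbb{R}):\widehat g\in C_c^\infty(\mathbb{R})\}$, which is dense in $H^s(\mathbb{R})$ (truncate and mollify $\widehat g$). If $g\in\mathcal{D}$ with $\supp\widehat g\subset\{|\xi|\le R\}$ and $\Theta\subset[-M,M]$, then using $|e^{i\tau}-1|\le|\tau|$ and the continuity (hence local boundedness) of $\Phi$,
\[
\bigl|S_tg(x+t\theta)-g(x)\bigr|=\frac{1}{2\pi}\Bigl|\int_{|\xi|\le R}e^{ix\xi}\bigl(e^{it(\theta\xi+\Phi(\xi))}-1\bigr)\widehat g(\xi)\,d\xi\Bigr|\le\frac{|t|}{2\pi}\Bigl(MR+\sup_{|\xi|\le R}|\Phi(\xi)|\Bigr)\|\widehat g\|_{L^1}.
\]
Hence $S_tg(x+t\theta)\to g(x)$ as $t\to0$, uniformly in $x\in\mathbb{R}$ and $\theta\in\Theta$; in particular the limit in the Corollary holds at every $x$ when $f=g\in\mathcal{D}$.

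Next I would bound the oscillation of a general $f\in H^s(\mathbb{R})$. Set
\[
\Omega f(x):=\limsup_{\delta\to0^+}\ \sup_{\substack{0<|t|<\delta\\ \theta\in\Theta}}\bigl|S_tf(x+t\theta)-f(x)\bigr|,
\]
and note that $\Omega f(x)=0$ is exactly the assertion $\lim_{(y,t)\to(x,0),\,y-x\in t\Theta}S_tf(y)=f(x)$: since $\Theta$ is bounded, the condition $y-x\in t\Theta$ together with $(y,t)\to(x,0)$ simply means $t\to0$ with $\theta\in\Theta$ unrestricted. The functional $\Omega$ is subadditive; by the previous paragraph $\Omega g\equiv0$ for $g\in\mathcal{D}$; and, writing $\mathcal{M}f(x):=\sup_{(t,\theta)\in[-1,1]\times\Theta}|S_tf(x+t\theta)|$, we have $\Omega f(x)\le2\,\mathcal{M}f(x)$ a.e.\ (because $|f(x)|=|S_0f(x)|\le\mathcal{M}f(x)$ a.e.). Now, given $\varepsilon>0$, split $f=g+h$ with $g\in\mathcal{D}$ and $\|h\|_{H^s(\mathbb{R})}<\varepsilon$; then $\Omega f\le\Omega g+\Omega h=\Omega h\le2\mathcal{M}h$, so for every $\lambda>0$, by Chebyshev's inequality and Theorem~\ref{thm:improved CLV} with $q=2$,
\[
\bigl|\{x\in I:\Omega f(x)>\lambda\}\bigr|\le\bigl|\{x\in I:\mathcal{M}h(x)>\tfrac{\lambda}{2}\}\bigr|\le\frac{4}{\lambda^2}\,\|\mathcal{M}h\|_{L^2(I)}^2\le\frac{4\,C_{2,s}^2}{\lambda^2}\,\varepsilon^2.
\]
The left-hand side is independent of $\varepsilon$, so it is $0$; taking $\lambda=1/n$ and $n\to\infty$ yields $\Omega f=0$ a.e.\ on $I$, which is the desired convergence on $I$, and hence, by the first paragraph, on all of $\mathbb{R}$.

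I do not expect any genuine obstacle here: once Theorem~\ref{thm:improved CLV} is available, the deduction is the standard one (and is carried out in the classical case in \cite{CLV12}). The only places asking for a little care are the measurability of $\Omega f$ and $\mathcal{M}f$ and the well-definedness of $S_tf$ for $f\in H^s(\mathbb{R})$ with $s<\tfrac12$; both follow in the usual way from the density of $\mathcal{D}$ and the maximal estimate, and I would just invoke the standard treatment rather than reprove it.
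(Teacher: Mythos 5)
Your argument is correct and is exactly the ``standard argument'' the paper invokes without writing out: density of band-limited data, sublinearity of the oscillation functional, Chebyshev plus the $q=2$ maximal estimate of Theorem~\ref{thm:improved CLV} on $I$, and translation invariance to pass from $I$ to all of $\mathbb{R}$. No gaps beyond the routine measurability/definition issues you already flag and defer to the standard treatment, which is consistent with the paper's level of detail.
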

Theorem \ref{thm:improved CLV} improves the result in \cite{CLV12} in two respects; the class of evolution operators has been widened from the case $\Phi(\xi)=|\xi|^2$ to those satisfying \eqref{Phi'' is bounded below} and \eqref{Phi'' is bigger than Phi'}, and our maximal estimates are valid for $q\in[1,4]$ (the estimate in \cite{CLV12} was proved in the only case $q=2$). While the proof in \cite{CLV12} may be modified in a straightforward way to go beyond the classical case $\Phi(\xi)=|\xi|^2$ to a certain extent, it seems to us to be difficult to handle case $\Phi(\xi)=|\xi|^a$ with $a$ close to $1$. Indeed, the argument in \cite{CLV12} rests on a certain widely used time localization argument which becomes increasingly weak as $a$ approaches $1$. To overcome this significant obstacle, we remove the use of the time localization lemma; this simplification to the proof has allowed us to handle the case $\Phi(\xi)=|\xi|^a$ for any $a>1$. Further explanation of this point will follow our proof of Theorem \ref{thm:improved CLV} in Section \ref{sec:Proof of Theorem}. Prior to that, we prepare for the proof of Theorem \ref{thm:improved CLV} in Section \ref{sec:preliminaries}.

\section{Preliminaries}\label{sec:preliminaries}
\subsection*{Notations}  
Associated with the operator $S_t$ given above by 
\[
S_tf(x)=\frac{1}{2\pi}\int_{\mathbb{R}}e^{i(x\xi+t\Phi(\xi))}\widehat{f}(\xi)\,d\xi
\]
and a fixed compact set $\Theta \subset \mathbb{R}$, we define the maximal operator $M_\Theta$ by
\[
M_\Theta f(x)=\sup\{|S_tf(x+t\theta)| : -1\le t\le 1,\,\theta\in\Theta\}.
\]
Also, we recall that the upper Minkowski dimension of $\Theta$ is defined by
\[
\beta(\Theta)=\inf\{r>0:\limsup_{\delta\to0}N(\Theta,\delta)\delta^r=0 \},
\]
where $N(\Theta,\delta)$ denotes the smallest number of $\delta$-intervals which cover $\Theta$.

We will use the following notations frequently:
\begin{itemize}
\item $I=(-1,1)$.

\item $q'=\frac{q}{q-1}$: H\"older conjugate of $q\in[1,\infty]$.

\item $A\lesssim B$: $A\le CB$ for some constant $C>0$.

\item $A\gtrsim B$: $A\ge CB$ for some constant $C>0$.

\item $A\sim B$: $C^{-1}B\le A\le CB$ for some constant $C>0$.

\item $L^p_xL^q_tL^r_\theta$: The Lebesgue space with norm 
\[
\|F\|_{L_x^pL_t^qL_\theta^r}=\left(\int\left(\int\left(\int|F(x,t,\theta)|^r\,d\theta\right)^{\frac qr}\,dt\right)^{\frac pq}\,dx\right)^{\frac1p},
\]
where the domains of integration will be clear from the context.

\end{itemize}

\subsection*{Useful lemmas}
The following lemmas will be crucial for the oscillatory integral estimates in the proof of Theorem \ref{thm:improved CLV}. Applying these lemmas appropriately essentially allows us to avoid the time localization lemma, which is used in \cite{CLV12}.  
\begin{lemma}[van der Corput's lemma]\label{lem:van der Corput}
Suppose $\lambda>1$ and we have $|\phi^{(k)}(x)|\geq1$ for all $(a,b)$. If $k=1$ and $\phi'$ is monotonic on $(a,b)$, or simply $k\geq2$, then there exists a constant $C_k$ such that
\[
\left|\int_a^be^{i\lambda\phi(x)}\psi(x)\,dx\right|<C_k\lambda^{-\frac1k}\left(\int_a^b|\psi'(x)|\,dx+\|\psi\|_{L^{\infty}}\right).
\]
\end{lemma}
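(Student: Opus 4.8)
The plan is to proceed by induction on the order $k$, following the classical argument, and to handle the amplitude $\psi$ at the end by an integration-by-parts reduction to the case $\psi\equiv1$.

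First I would treat the base case $k=1$ with $\psi\equiv1$. Since $\phi'$ is monotonic and $|\phi'|\ge1$ on $(a,b)$, the substitution $u=\phi(x)$ is legitimate, and one writes
\[
\int_a^b e^{i\lambda\phi(x)}\,dx=\int_{\phi(a)}^{\phi(b)}e^{i\lambda u}\,\frac{du}{\phi'(\phi^{-1}(u))}.
\]
Because $1/\phi'$ is monotonic in $u$ and bounded by $1$ in absolute value, the second mean value theorem for integrals bounds this by $C\lambda^{-1}$ (two endpoint terms each controlled by evaluating the primitive $\frac{1}{i\lambda}e^{i\lambda u}$, which has size $\lambda^{-1}$). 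This gives the constant $C_1$.

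Next I would do the inductive step for $k\ge2$ with $\psi\equiv1$, assuming the statement for $k-1$. Let $c\in(a,b)$ be a point where $|\phi^{(k-1)}|$ attains its minimum on $(a,b)$; since $|\phi^{(k)}|\ge1$, on any interval of the form $(c,c+\delta)$ or $(c-\delta,c)$ outside a $\delta$-neighbourhood of $c$ we have $|\phi^{(k-1)}|\ge\delta$, and on such an interval $\phi^{(k-1)}/\delta$ has $(k-1)$-st derivative $\ge1$, so the induction hypothesis (after rescaling $\lambda\mapsto\lambda\delta$) yields a bound $C_{k-1}(\lambda\delta)^{-1/(k-1)}$ on each of the at most two such pieces; the excised neighbourhood of $c$ contributes at most $2\delta$ trivially. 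Optimising $\delta\sim\lambda^{-1/k}$ balances the two contributions and gives $C_k\lambda^{-1/k}$. (If $c$ is an endpoint the argument is the same with one piece.)

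Finally I would restore a general amplitude $\psi$. Writing $F(x)=\int_a^x e^{i\lambda\phi(t)}\,dt$, which by the $\psi\equiv1$ case satisfies $\|F\|_{L^\infty(a,b)}\le C_k\lambda^{-1/k}$ uniformly in the right subinterval, integration by parts gives
\[
\int_a^b e^{i\lambda\phi(x)}\psi(x)\,dx=F(b)\psi(b)-\int_a^b F(x)\psi'(x)\,dx,
\]
so the left side is bounded by $\|F\|_{L^\infty}\bigl(|\psi(b)|+\int_a^b|\psi'(x)|\,dx\bigr)\le C_k\lambda^{-1/k}\bigl(\|\psi\|_{L^\infty}+\int_a^b|\psi'(x)|\,dx\bigr)$, which is the claimed inequality (one absorbs $C_k$ and replaces $|\psi(b)|$ by $\|\psi\|_{L^\infty}$). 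The only delicate point is that the $\psi\equiv1$ bound must hold with the \emph{same} constant on every subinterval $(a,x)\subset(a,b)$; this is automatic since the hypotheses $|\phi^{(k)}|\ge1$ (and monotonicity of $\phi'$ when $k=1$) are inherited by subintervals, so $C_k$ depends only on $k$. I expect the main obstacle to be bookkeeping in the inductive step — correctly tracking how the rescaling of $\phi^{(k-1)}$ by $\delta$ interacts with the factor $\lambda$ and ensuring the two error terms are balanced by the choice of $\delta$ — rather than any conceptual difficulty.
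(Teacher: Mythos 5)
Your argument is correct and is the classical proof of van der Corput's lemma (base case $k=1$ via monotonicity of $\phi'$, induction by excising a $\delta$-neighbourhood of the point where $|\phi^{(k-1)}|$ is minimal with $\delta\sim\lambda^{-1/k}$, and restoring the amplitude by integration by parts against $F(x)=\int_a^x e^{i\lambda\phi(t)}\,dt$); the paper does not prove the lemma at all but simply cites Stein's book, and your proof is essentially the one given there. The only detail worth making explicit is that when you invoke the $k=1$ case inside the inductive step (i.e.\ when $k=2$) you must check that $\phi'$ is monotonic on the two subintervals; this is automatic because $\phi''$ is continuous with $|\phi''|\ge1$, hence single-signed, and likewise the induction hypothesis is applied to $\phi/\delta$ with $\lambda\delta>1$, which your choice $\delta\sim\lambda^{-1/k}$ indeed guarantees.
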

For a proof of van der Corput's lemma, we refer the reader to \cite{St94}. 

\begin{lemma}\label{lem: HLS-type}
Let $1\leq q\le4$. There exists a constant $C_q\ge0$ such that 
\[
\left|\iiiint g(x,t)h(x',t') |x-x'|^{-\frac12}\,dxdx'dtdt'\right|\le C_q\|g\|_{L_x^{q'}L_t^1}\|h\|_{L^{q'}_xL^1_t},
\]
where the integrals are taken over $(x,t), (x',t')\in I\times[-1,1]$. 
\end{lemma}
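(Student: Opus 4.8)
The plan is to integrate out the time variables, reducing the quadruple integral to a two-dimensional one in the $x$-variables, and then to recognise the result as a localised instance of the one-dimensional Hardy--Littlewood--Sobolev inequality.

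First I would bring the absolute values inside and apply Fubini in $t,t'$. Setting $G(x)=\int_{-1}^{1}|g(x,t)|\,dt$ and $H(x')=\int_{-1}^{1}|h(x',t')|\,dt'$, one gets
\[
\left|\iiiint g(x,t)h(x',t')|x-x'|^{-\frac12}\,dx\,dx'\,dt\,dt'\right|\le\iint_{I\times I}G(x)H(x')|x-x'|^{-\frac12}\,dx\,dx',
\]
and by the very definition of the mixed norm, $\|G\|_{L^{q'}(I)}=\|g\|_{L^{q'}_xL^1_t}$ and $\|H\|_{L^{q'}(I)}=\|h\|_{L^{q'}_xL^1_t}$. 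Thus it suffices to prove that for all $1\le q\le4$ there is a constant $C_q$ with
\[
\iint_{I\times I}G(x)H(x')|x-x'|^{-\frac12}\,dx\,dx'\le C_q\|G\|_{L^{q'}(I)}\|H\|_{L^{q'}(I)}.
\]
Since $q\le4$ forces $q'\ge\tfrac43$ and $I$ has finite measure, H\"older's inequality gives $\|G\|_{L^{4/3}(I)}\le|I|^{\frac34-\frac1{q'}}\|G\|_{L^{q'}(I)}$ and likewise for $H$, so the general case follows once the endpoint $q=4$, i.e. $q'=\tfrac43$, is established.

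For $q=4$ I would extend $G$ and $H$ by zero outside $I$ and invoke the Hardy--Littlewood--Sobolev inequality on $\mathbb{R}$: for $F_1\in L^{p}$, $F_2\in L^{p}$ and $0<\lambda<1$ with $\tfrac2p+\lambda=2$ one has $\big|\iint F_1(x)F_2(y)|x-y|^{-\lambda}\,dx\,dy\big|\lesssim\|F_1\|_{L^{p}}\|F_2\|_{L^{p}}$; here $\lambda=\tfrac12$ and $p=\tfrac43$ satisfy exactly this relation, which is precisely why $q=4$ is the sharp threshold. (If one prefers a self-contained argument, the required inequality can also be obtained by splitting $|x-x'|^{-1/2}$ into its restriction to $\{|x-x'|<\varepsilon\}$, handled by Young's inequality since this truncated kernel lies in $L^1$ with norm $\sim\sqrt\varepsilon$, and its restriction to $\{|x-x'|\ge\varepsilon\}$, bounded crudely by $\varepsilon^{-1/2}$ on the finite-measure domain, followed by optimisation in $\varepsilon$; or by Schur's test against a power weight.) There is no genuine obstacle in this lemma: having passed to absolute values there is no oscillation or cancellation to exploit, and the only point requiring care is the exponent bookkeeping that isolates $q\le4$ as the correct range and makes the endpoint $q=4$ rely on the full strength of Hardy--Littlewood--Sobolev rather than on elementary H\"older estimates.
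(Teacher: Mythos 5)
Your proposal is correct and follows essentially the same route as the paper: integrate out the time variables to reduce to $G,H$ on $I$, apply the one-dimensional Hardy--Littlewood--Sobolev inequality at the $L^{4/3}$ endpoint, and pass from $L^{4/3}(I)$ to $L^{q'}(I)$ by H\"older using $q'\ge\tfrac43$. The additional remarks on alternative elementary proofs are fine but not needed.
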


\begin{proof}
Denoting $G(x)=\|g(x,\cdot)\|_{L^1}$ and $H(x')=\|h(x',\cdot)\|_{L^1}$,
\[
\left|\iiiint g(x,t)h(x',t') |x-x'|^{-\frac12}\,dxdx'dtdt'\right|\le \int_{-1}^1\int_{-1}^1G(x)H(y)|x-x'|^{-\frac12}\,dxdx'.
\]
By the Hardy--Littlewood--Sobolev inequality,
\begin{align*}
\int_{-1}^1\int_{-1}^1G(x)H(x')|x-x'|^{-\frac12}\,dxdx'&\lesssim\|G\|_{L^{\frac43}(I)}\|H\|_{L^{\frac43}(I)}\\
&\lesssim\|g\|_{L_x^{q'}L_t^1}\|h\|_{L_x^{q'}L_t^1},
\end{align*}
where the last inequality is obtained by H\"older's inequality since $\frac 43\le q'$ from our assumption.
\end{proof}

\section{Proof of Theorem \ref{thm:improved CLV}}\label{sec:Proof of Theorem}
\begin{proof}[Proof of Theorem \ref{thm:improved CLV}]
We fix $q\in[2,4]$ and without loss of generality, we suppose $\Theta\subset [-1,1]$. The case $q \in [1,2)$ follows immediately by H\"older's inequality.

The first half of the proof is based on the proof in \cite{CLV12}. Suppose $\psi_0\in C_0^\infty(I)$ and $\psi\in C_0^\infty((-2,-\frac12)\cup(\frac12,2))$ give rise to a standard dyadic partition of unity
$$\psi_0(\xi)+\sum_{k\ge1}\psi_{k}\equiv1,$$
where $\psi_{k}=\psi(\frac{\cdot}{2^{k-1}})$. For each $0\le k\in\mathbb{Z}$, the frequency localization operator $P_{k}$ is defined  by
\[
\widehat{P_{k}f}(\xi)=\psi_k(\xi)\widehat{f}(\xi).
\]
Then, 
\begin{equation}\label{ineq: Shiraki LP decom.}
\left\|M_{\Theta}  f\right\|_{L^q(I)}^q\lesssim\|M_{\Theta}  P_0f\|_{L^q(I)}^q+\sum_{k\ge1}\left\|M_{\Theta}  P_{k}f\right\|_{L^q(I)}^q.
\end{equation}
The first term is relatively easy to estimate. In fact,
\begin{align*}
\|M_{\Theta}P_0f\|_{L^q(I)}^q&\lesssim\int_{\mathbb{R}}\psi_0(\xi)|\widehat{f}(\xi)|\,d\xi\\
&\lesssim\|f\|_{L^2}\\
&\lesssim\|f\|_{H^s}
\end{align*}
for $s\ge0$ and thereby this case is no problem.

For the remaining cases, fix a parameter $\sigma$ (to be chosen at the end of the proof) satisfying
\begin{equation}\label{Shiraki condi:Thera to Omega}
\frac q4\le \sigma\le1.
\end{equation}
Since $\Theta$ is compact, for each $\lambda>0$ there exists a finite collection $\{\Omega_j(\lambda)\}_{j=1}^N$ which satisfies
$$
\Theta\subset\bigcup_{j=1}^N\Omega_j(\lambda),
$$
where $|\Omega_j(\lambda)|\le\lambda^{-\sigma}$ for each $j$ and $N=N(\Theta,\lambda^{-\sigma})$ is the smallest number of $\lambda^{-\sigma}$-intervals which cover $\Theta$. For fixed $k$ and $x\in(-1,1)$, 
\[
M_{\Theta}P_{k}f(x)^q
\le \sum_{j=1}^{N}\sup_{\substack{-1\le t\le1\\ \theta\in\Omega_{k,j}}}|S_t P_{k}f(x+t\theta)|^q,
\]
where $\Omega_{k,j} = \Omega_j(2^k)$, and therefore
\begin{align*}
\sum_{k\ge1}\left\|M_{\Theta}  P_{k}f\right\|_{L^q(I)}^q\le \sum_{k\ge1}\sum_{j=1}^N\left\|M_{\Omega_{k,j}}  P_{k}f\right\|_{L^q(I)}^q.
\end{align*}
Now, we shall introduce the following useful proposition.
\begin{proposition}\label{prop: MPf is lesssim than lambda f}
Let $k\ge1$ and $\Omega$ be an interval with $|\Omega|\le 2^{-\sigma k}$. Then, there exists a constant $C_q>0$ such that 
\begin{equation}\label{ineq: Shiraki MPf is less than lambda f}
\left\|M_{\Omega}P_k f\right\|_{L^q(I)}\le C_q  2^{\frac k4}\|f\|_{L^2}
\end{equation}
holds for all $f\in L^2(\mathbb{R})$.
\end{proposition}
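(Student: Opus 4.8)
The plan is a $TT^{*}$ argument in which the kernel is estimated by van der Corput's lemma and the resulting fractional integral is absorbed by Lemma~\ref{lem: HLS-type}; this is what lets us dispense with the time-localization lemma of \cite{CLV12}. We may assume $q\in[2,4]$ (as in the proof of the theorem), and for $k$ in a bounded range the crude bound $|S_tP_kf(y)|\le\frac1{2\pi}\int|\psi_k\widehat f|\lesssim 2^{k/2}\|f\|_2$ (Cauchy--Schwarz) already gives \eqref{ineq: Shiraki MPf is less than lambda f}; so I assume $k$ large enough that $\psi_k$ is supported where \eqref{Phi'' is bounded below}--\eqref{Phi'' is bigger than Phi'} apply. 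Then I linearize: by measurable selection there are measurable $t\colon I\to[-1,1]$ and $\theta\colon I\to\Omega$ with $M_\Omega P_kf(x)\le 2|S_{t(x)}P_kf(x+t(x)\theta(x))|$ a.e., and the right-hand side equals $|T\widehat f(x)|$ for the linear operator $Th(x)=\frac1{2\pi}\int e^{i((x+t(x)\theta(x))\xi+t(x)\Phi(\xi))}\psi_k(\xi)h(\xi)\,d\xi$. Thus \eqref{ineq: Shiraki MPf is less than lambda f} reduces to $\|T\|_{L^2\to L^q(I)}\lesssim 2^{k/4}$, equivalently, by $TT^{*}$, to $\|TT^{*}\|_{L^{q'}(I)\to L^q(I)}\lesssim 2^{k/2}$, where $TT^{*}$ is the integral operator on $I$ with kernel
\[
\mathcal K(x,y)=\int_{\mathbb R}\psi_k(\xi)^2 e^{i(A_{x,y}\xi+B_{x,y}\Phi(\xi))}\,d\xi,\qquad
A_{x,y}=(y-x)+\big(t(y)\theta(y)-t(x)\theta(x)\big),\quad B_{x,y}=t(y)-t(x),
\]
so that $\|TT^{*}g\|_{L^q(I)}=\sup_{\|h\|_{L^{q'}(I)}\le1}\big|\iint_{I\times I}\mathcal K(x,y)g(x)\overline{h(y)}\,dxdy\big|$.

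The heart of the matter is the oscillatory-integral estimate $|\mathcal K(x,y)|\lesssim 2^{k/2}\,|x-y|^{-1/2}$, valid up to absorbing the perturbation $t(y)\theta(y)-t(x)\theta(x)$, which by \eqref{Shiraki condi:Thera to Omega} and the hypothesis on $\Omega$ has size $\le|B_{x,y}|+2^{-\sigma k}$. To prove it I distinguish whether the phase $A_{x,y}\xi+B_{x,y}\Phi(\xi)$ has a critical point $\xi_{*}$ in the annulus $|\xi|\sim2^k$. If it does, then $\Phi'(\xi_{*})=-A_{x,y}/B_{x,y}$ and, by \eqref{Phi'' is bigger than Phi'}, $|\Phi''(\xi_{*})|\ge C_2|\Phi'(\xi_{*})|/|\xi_{*}|\gtrsim |A_{x,y}|/(|B_{x,y}|2^k)$; applying Lemma~\ref{lem:van der Corput} with $k=2$ on a neighbourhood of $\xi_{*}$, where the second derivative of the phase is comparable to $|B_{x,y}\Phi''(\xi_{*})|$, gives $|\mathcal K(x,y)|\lesssim (|B_{x,y}|\,|\Phi''(\xi_{*})|)^{-1/2}\lesssim 2^{k/2}|A_{x,y}|^{-1/2}$, while on the complement, where the first derivative of the phase is large and monotone, Lemma~\ref{lem:van der Corput} with $k=1$ gives at least as good a bound; condition \eqref{Phi'' is bounded below} is used here to know that $\Phi''$ does not vanish (so $\Phi'$ is monotone and $\xi_{*}$ unique) and, via $|B_{x,y}\Phi''(\xi)|\gtrsim 2^{-k}|B_{x,y}|$, as a fallback bound $|\mathcal K|\lesssim 2^{k/2}|B_{x,y}|^{-1/2}$. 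If there is no critical point in the annulus the phase is everywhere non-stationary and the same two lemmas, together with the trivial bound $|\mathcal K|\le\|\psi_k\|_\infty^{2}\,|\supp\psi_k|\lesssim2^k$, again yield the claim. Combining the cases and using that $A_{x,y}$ agrees with $y-x$ up to the perturbation above gives the stated kernel bound, possibly after a further dyadic decomposition of the annulus adapted to the size of $\Phi''$. I expect this oscillatory-integral estimate to be the main obstacle — in particular, tracking the $a$-dependent quantities $\Phi',\Phi''$ through the stationary-phase regime using only \eqref{Phi'' is bounded below}--\eqref{Phi'' is bigger than Phi'}, and checking that the direction-perturbation of size $\lesssim2^{-\sigma k}$ is genuinely harmless; this is the argument that replaces the time-localization lemma of \cite{CLV12}.

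Granted the kernel bound, the proof closes at once:
\[
\Big|\iint_{I\times I}\mathcal K(x,y)g(x)\overline{h(y)}\,dxdy\Big|
\lesssim 2^{k/2}\iint_{I\times I}|g(x)||h(y)|\,|x-y|^{-1/2}\,dxdy
\lesssim 2^{k/2}\,\|g\|_{L^{q'}(I)}\|h\|_{L^{q'}(I)},
\]
the last step being Lemma~\ref{lem: HLS-type} applied with the two functions taken independent of the auxiliary variables $t,t'$; note that the hypothesis $q\le4$, i.e.\ $q'\ge\tfrac43$, is exactly what is needed there. Taking the supremum over $\|h\|_{L^{q'}(I)}\le1$ gives $\|TT^{*}\|_{L^{q'}(I)\to L^q(I)}\lesssim2^{k/2}$, hence $\|T\|_{L^2\to L^q(I)}\lesssim2^{k/4}$, and therefore \eqref{ineq: Shiraki MPf is less than lambda f}.
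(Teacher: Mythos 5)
Your overall strategy — $TT^{*}$, van der Corput for the kernel, Lemma~\ref{lem: HLS-type} to close — is the same as the paper's (the paper dualizes in the variables $(x,t,\theta)$ rather than linearizing, but that is cosmetic). However, there is a genuine gap: the claimed kernel bound $|\mathcal K(x,y)|\lesssim 2^{k/2}|x-y|^{-1/2}$ is simply false in the regime where $|x-y|\lesssim 2^{-\sigma k}$ and $|t(x)-t(y)|$ is small. There the perturbation $t(y)\theta(y)-t(x)\theta(x)$, of size up to $2^{-\sigma k}$, can cancel $y-x$ entirely, so that $A_{x,y}=B_{x,y}=0$ is possible and the phase is constant; then $|\mathcal K(x,y)|\sim 2^{k}$, which exceeds $2^{k/2}|x-y|^{-1/2}$ for every $|x-y|\gg 2^{-k}$, and since $\sigma\le 1$ (indeed $\sigma$ wants to be as small as $q/4$) this regime is nonempty. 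This is exactly the paper's third region $V_3$, and it cannot be absorbed into the oscillatory estimates: the paper handles it with the trivial bound $|K_\lambda|\lesssim 1$ together with Young's convolution inequality on the thin strip $\{|x-x'|<4\lambda^{-\sigma}\}$, producing $\lambda^{1-2\sigma/q}$, and it is precisely here that the hypothesis $\sigma\ge q/4$ from \eqref{Shiraki condi:Thera to Omega} is consumed. Your write-up never uses $\sigma\ge\frac q4$ for anything, which is a sign that this non-oscillatory region has been lost; your closing display, which integrates $|x-y|^{-1/2}$ over all of $I\times I$, therefore does not follow.

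A secondary, repairable issue: your stationary-phase case applies van der Corput with second-derivative bound "comparable to $|B_{x,y}\Phi''(\xi_*)|$ on a neighbourhood of $\xi_*$", but the hypotheses \eqref{Phi'' is bounded below}--\eqref{Phi'' is bigger than Phi'} give no upper bound or modulus of continuity for $\Phi''$, so comparability on a neighbourhood is not justified. The paper avoids this by splitting the $\xi$-integral into $U_1=\{|A|\ge 2|B||\Phi'(\lambda\xi)|\}$ (first-derivative regime, at most three intervals by monotonicity of $\Phi'$) and its complement $U_2$, on which \eqref{Phi'' is bigger than Phi'} gives the pointwise lower bound $|\phi''|\gtrsim\lambda|A|$ everywhere, with no need to localize near the critical point. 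Adopting that splitting, and adding the $V_3$ argument above, would complete your proof along the paper's lines.
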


\begin{proof}[Proof of Proposition \ref{prop: MPf is lesssim than lambda f}]
Set $\lambda=2^k$ and  
\[
Tf(x,t,\theta):=\chi(x,t,\theta)\int_{\mathbb{R}} e^{i((x+t\theta)\xi+t\Phi(\xi))}f(\xi)\psi(\tfrac\xi\lambda)\,d\xi,
\]
where $\chi=\chi_{I\times[-1,1]\times\Omega}$. Then \eqref{ineq: Shiraki MPf is less than lambda f} follows from
\begin{equation}\label{ineq: Shiraki Tf is less than lambda f}
\|Tf\|_{L^{q}_x L^\infty_t L^\infty_\theta}\lesssim\lambda^{\frac14}\|f\|_{L^2}\quad(\lambda\gtrsim1)
\end{equation}
since
\begin{align*}
\left\|M_{\Omega}  P_{k} f\right\|_{L^q(I)}&\sim\|T\widehat{f}\|_{L_x^2L_t^\infty L_\theta^\infty}\\
&\lesssim\lambda^{\frac14}\|\widehat{f}\|_{L^2}\\
&\lesssim\lambda^{\frac14}\|f\|_{L^2}
\end{align*}
by Plancherel's theorem. Let us consider the dual form of \eqref{ineq: Shiraki Tf is less than lambda f}, which is
\begin{equation}\label{ineq: Shiraki T* argument}
\|T^*F\|_{L^2}\lesssim\lambda^{\frac14}\|F\|_{L^{q'}_xL^1_tL^1_\theta}
\end{equation}
where
\[
T^*F(\xi)=\psi(\tfrac\xi\lambda)\iiint\chi(x',t',\theta') e^{-i((x'+t'\theta')\xi+t'\Phi(\xi))}F(x',t',\theta')\,dx'dt'd\theta'.
\]
Then,
\begin{align*}
&\|T^*F\|_{L^2}^2\\
&=\lambda\int\psi^2(\xi)\iiint\iiint\chi(x,t,\theta)\chi(x',t',\theta')\\
&\quad\times e^{i(\lambda(x-x'+t\theta+t'\theta')\xi+(t-t')\Phi(\lambda\xi))}\bar{F}(x,t,\theta)F(x',t',\theta')\,dxdtd\theta dx'dt'd\theta'd\xi\\
&=\lambda\int_{W}\int_{W'}\chi(w)\chi(w')\bar{F}(w)F(w')K_\lambda(w,w')\,dwdw'\\
&=\sum_{\ell=1}^3\lambda\iint_{V_\ell}\chi(w)\chi(w')\bar{F}(w)F(w')K_\lambda(w,w')\,dwdw'\\
&=:A_1+A_2+A_3.
\end{align*}
Here, we denote $w=(x,t,\theta)\in W$ and $w'=(x',t',\theta')\in W$, where $W:=I\times[-1,1]\times\Omega$. Also, 
\[
K_\lambda(w,w')=\int_{\mathbb{R}} e^{i\phi(\lambda\xi)}\psi^2(\xi)\,d\xi,
\]
\begin{align*}
\phi(\xi,w,w')=(x-x'+t\theta-t'\theta')\xi+(t-t')\Phi(\xi),
\end{align*}
and
\[
\begin{cases}
V_1&=\{(w,w')\in W\times W:|x-x'|<4|t-t'|\},\\
V_2&=\{(w,w')\in W\times W:|x-x'|\ge4|t-t'|\ \mbox{and}\ |x-x'|\ge4\lambda^{-\sigma}\},\\
V_3&=\{(w,w')\in W\times W:|x-x'|\ge4|t-t'|\ \mbox{and}\ |x-x'|<4\lambda^{-\sigma}\}.
\end{cases}
\]

Thus, \eqref{ineq: Shiraki T* argument} follows from
\[
A_\ell\lesssim\lambda^{\frac12}\|F\|^2_{L^{q'}_xL^1_tL^1_\theta} 
\]
for each $\ell=1,2,3$. 
\subsection*{The term $A_1$}
Let us start with an estimate of $A_1$. Since
\[
|\phi''(\lambda\xi)|=\lambda^2|t-t'||\Phi''(\lambda\xi)|\gtrsim\lambda|x-x'|
\]
holds from \eqref{Phi'' is bounded below}, we are allowed to apply Lemma \ref{lem:van der Corput} to get
\[
|K_\lambda(x,x',t,t',\theta,\theta')|\lesssim(\lambda|x-x'|)^{-\frac12}.
\]
By using Lemma \ref{lem: HLS-type}, it follows that 
\begin{align*}
A_1&\le\lambda^{\frac12}\iint_{V_1}\chi(w')|F(w')|\chi(w)|\bar{F}(w)||x-x'|^{-\frac12}\,dwdw'\\
&\lesssim\lambda^{\frac12}\|F\|_{L^{q'}_xL^1_tL^1_\theta}^2.
\end{align*}

\subsection*{The term $A_2$}
Next, we shall consider $A_2$. In this case, the following key relationship holds:
\begin{equation} \label{relationship: phase for A2}
|x-x'+t\theta-t'\theta'|\sim|x-x'|.
\end{equation}
Indeed, 
\begin{align*}
|x-x'+t\theta-t'\theta'|&\ge |x-x'|-|t-t'|-|\theta-\theta'|\\
&\ge\frac34|x-x'|-\lambda^{-\sigma}\\
&\ge\frac12|x-x'|.
\end{align*}
Similarly, the other way holds, too. 

Now, let us observe that for all $(w,w')\in V_2$,  we have
\begin{equation}\label{ineq:K for V2}
|K_\lambda(w,w')|\lesssim\lambda|x-x'|^{-\frac12}.
\end{equation}
Before proving \eqref{ineq:K for V2}, we note that 
\begin{align*}
A_2&\lesssim\lambda^{\frac12}\|F\|_{L^{q'}_xL^1_tL^1_\theta}^2
\end{align*}
immediately follows by using Lemma \ref{lem: HLS-type} as before.

To see \eqref{ineq:K for V2}, let us split $K_\lambda$ into $B_1$ and $B_2$ as follows
\begin{align*}
K_\lambda(w,w')&=\int_{U_1}e^{i\phi(\lambda\xi)}\psi^2(\xi)\,d\xi +\int_{U_2}e^{i\phi(\lambda\xi)}\psi^2(\xi)\,d\xi\\
&=:B_1+B_2,
\end{align*}
where
\[
U_1=\{\xi\in\mathbb{R}:|x-x'+t\theta-t'\theta'|\ge2|t-t'||\Phi'(\lambda\xi)|\}
\]
and
\[
U_2=\{\xi\in\mathbb{R}:|x-x'+t\theta-t'\theta'|<2|t-t'||\Phi'(\lambda\xi)|\}.
\]
For $B_1$, we have
\begin{align*}
|\phi'(\lambda\xi)|&\ge\lambda|x-x'+t\theta-t'\theta'|-\lambda|t-t'||\Phi'(\lambda\xi)|\\
&\ge\frac\lambda2|x-x'+t\theta-t'\theta'|\\
&\ge \frac\lambda4|x-x'|\\
&>\lambda^{1-\sigma}\\
&\ge1
\end{align*}
because of \eqref{Shiraki condi:Thera to Omega}. From \eqref{Phi'' is bounded below} and the intermediate value theorem, $\Phi''(\xi)$ is single-signed on $(-\infty, -1]$ and $[1,\infty)$, which guarantees that $\Phi'(\xi)$ is monotone for $|\xi|\ge1$. Hence, $U_1$ consists of at most three intervals. Invoking Lemma \ref{lem:van der Corput}, 
\begin{align*}
B_1&\lesssim (\lambda|x-x'|)^{-1}\\
&\lesssim (\lambda|x-x'|)^{-\frac12}.
\end{align*}
On the other hand, for $B_2$, it follows from \eqref{Phi'' is bigger than Phi'} that
\begin{align*}
|\phi''(\lambda\xi)|&=\lambda^2|t-t'||\Phi''(\lambda\xi)|\\
&\gtrsim\lambda|t-t'||\Phi'(\lambda\xi)|\\
&\gtrsim\lambda|x-x'+t\theta-t\theta'|\\
&\gtrsim\lambda|x-x'|.
\end{align*}
Then, by using Lemma \ref{lem:van der Corput}, we obtain
\[
B_2\lesssim(\lambda|x-x'|)^{-\frac12}.
\]
Therefore, \eqref{ineq:K for V2} holds. 

\subsection*{The term $A_3$}
It remains to show 
\[
A_3\lesssim\lambda^{\frac12}\|F\|_{L^{q'}_xL^1_tL^1_\theta}^2.
\]
Trivially, 
\[
|K_\lambda(w,w')|\lesssim1
\]
so by the dual form of Young's convolution inequality
\begin{align*}
&\int_{-1}^1\int_{-1}^1\|F(x,\cdot,\cdot)\|_{L_t^1L_\theta^1}\|F(x',\cdot,\cdot)\|_{L^1_tL^1_\theta}\chi_{[-4\lambda^{-\sigma},4\lambda^{-\sigma}]}(x-x')\,dxdx'\\
&\lesssim\|F\|^2_{L^{q'}_xL^1_tL^1_\theta}\|\chi_{[-4\lambda^{-\sigma},4\lambda^{-\sigma}]}\|_{L^{\frac q2}}\\
&\sim\lambda^{1-\frac{2\sigma}{q}}\|F\|^2_{L^{q'}_xL^1_tL^1_\theta}.
\end{align*}
Therefore, we conclude that
\[
A_3\lesssim\lambda^{1-\frac{2\sigma}{q}}\|F\|^2_{L^{q'}_xL^1_tL^1_\theta}\lesssim\lambda^{\frac12}\|F\|^2_{L^{q'}_xL^1_tL^1_\theta}
\]
whenever \eqref{Shiraki condi:Thera to Omega}, as claimed.
\end{proof}
When $\lambda$ is large, by the definition of the upper Minkowski dimension, for small $\varepsilon>0$ there is  a constant $C_\varepsilon> 0 $ depending on $\varepsilon$ such that
\[
N(\Theta,\lambda^{-\sigma})\le C_\varepsilon\lambda^{\sigma\beta(\Theta)+\varepsilon}.
\]
Thus, if we also let $\widehat{\tilde{P_k}f}=\tilde{\psi_k}\widehat{f}$, where $\tilde{\psi}\in C_0^\infty((-4,-\frac14)\cup(\frac14,4))$ with $\tilde{\psi}\equiv1$ on $(-2,-\frac12)\cup(\frac12,2)$, then
\begin{align*}
\sum_{k\ge1}\sum_{j=1}^N\|M_{\Omega_{k,j}}P_kf\|_{L^q(I)}^q&=\sum_{k\ge1}\sum_{j=1}^N\|M_{\Omega_{k,j}}P_k\tilde{P_k}f\|_{L^q(I)}^q\\
&\lesssim\sum_{k\ge1} \sum_{j=1}^N2^{\frac{qk}{4}}\|\tilde{P_k}f\|_{L^2}^q\\
&\lesssim \sum_{k\ge1}2^{kq(\frac\sigma q\beta(\Theta)+\frac14+\frac\sigma q\varepsilon)}\|\tilde{P}_kf\|_{L^2}^q\\
&\sim \sum_{k\ge1}2^{-\frac\sigma q\varepsilon}\left(\int_{\supp{\tilde{\psi_k}}}2^{2k(\frac {\sigma\beta(\Theta)}{q}+\frac14+\frac{2\sigma\varepsilon}{q})}|\widehat{f}(\xi)|^2\,d\xi\right)^{\frac q2}\\
&\lesssim\|f\|_{H^{\frac{\sigma\beta(\Theta)}{q}+\frac14+\frac{2\sigma\varepsilon}{q}}}.
\end{align*}
Therefore, for arbitrary $\varepsilon>0$,
\[
\|M_\Theta f\|_{L^q(I)}\lesssim\|f\|_{H^{\frac{\sigma\beta(\Theta)}{q}+\frac14+\varepsilon}}
\]
holds. Referring to \eqref{Shiraki condi:Thera to Omega}, we shall let the Sobolev order be as small as possible to conclude that for arbitrary $\varepsilon>0$
\[
\|M_\Theta f\|_{L^q(I)}\lesssim\|f\|_{H^{\frac{\beta(\Theta)+1}{4}+\varepsilon}},
\]
which ends the proof.
\end{proof}
\remarks
The critical step in the above proof of Theorem \ref{thm:improved CLV} is Proposition \ref{prop: MPf is lesssim than lambda f}. The corresponding result in \cite{CLV12} (Lemma 3.1), stated for $q=2$ and $\Phi(\xi) = |\xi|^2$, is established through the following steps: $TT^*$ argument, the time localization lemma, Schur's lemma and then an oscillatory integral argument. Following this approach, one may extend by simple modification, for example, to the case $\Phi(\xi) = |\xi|^a$ with $a \geq \frac32$. The time localization lemma reduces to the case of time intervals of length $\lambda^{1-a}$, and for $a$ close to 1 this causes certain technical difficulties in the estimation of the oscillatory integrals which arise; in particular, the relationship \eqref{relationship: phase for A2} breaks down if we follow their argument as it stands. In order to overcome the significant technical difficulty, we removed the use of the time localization lemma and replaced this with appropriate decompositions of the domain $W \times W$.

\subsection*{Acknowledgment}The author would like to thank Neal Bez for his encouragement and a lot of discussions.

\end{document}